\newtheorem{theorem}{Theorem}
\newtheorem{lemma}{Lemma}
\newtheorem{proposition}{Proposition}
\newcommand\gfrac[1]{\Gamma \left (\frac{#1}{2} \right )}
\newcommand{\prob}[0]{\textbf{Pr}}
\begin{document}

\title{\bf Concentration inequalities for the sample correlation coefficient}
  \author{
    Daniel Salnikov\thanks{Department of Mathematics, Huxley Building, Imperial College, 180 Queen's Gate, South Kensington, London, SW7 2AZ, UK.} 
    \thanks{Great Ormond Street Institute of Child Health, 30 Guilford Street, London WC1N 1EH, PEP DRIVE}\footnotemark[2]
    \\
    Imperial College London   
    }

\maketitle

\begin{abstract}
    The sample correlation coefficient $R$ plays an important role in many statistical analyses. We study the moments of $R$ under the bivariate Gaussian model assumption, provide a novel approximation for its finite sample mean and connect it with known results for the variance. We exploit these approximations to present non-asymptotic concentration inequalities for $R$. Finally, we illustrate our results in a simulation experiment that further validates the approximations presented in this work.
\end{abstract}    
    \noindent%
    {\it Keywords:} sample correlation coefficient, concentration inequality, 
    \newline moments, sub-Gaussian bound.

%\newpage
\section{Introduction}
\label{sec: intro}
The sample correlation coefficient $R$ plays an important role in many statistical analyses. It is defined for samples with size $n \geq 3$, under the bivariate normal model assumption Fisher obtained an expression for the density function and proposed a $Z$-transform that is approximately normal; see \cite{fisher_corr, r_moments}. However, the associated normal approximation requires large sample sizes to be valid, and the asymptotic variance of the $Z$-transform does not depend on $\rho$; see \cite{hotelling, wimberton}. To alleviate these shortcomings, \cite{hotelling} and \cite{r_moments} derived closed form expressions for the moments of $R$ that depend on the population correlation coefficient $\rho$ and sample size $n$. We build on this work by proposing simplified bounds for the mean and variance that depend on $\rho$ and $n$. Further, we exploit these bounds by deriving non-asymptotic concentration inequalities for $R$.

\section{Mean and variance approximations}
\label{sec: mean and variance approximation}
We are interested in approximating the mean of the Pearson sample correlation coefficient computed from a sample of random variables $(X_i, Y_i) \in \mathbb{R}^2$, for $i \in \{ 1, \dots, n\}$. The sample correlation coefficient $R$ is given by
\begin{equation*}
   R = \frac{\sum_{i = 1}^{n} \big ( X_i - \overline{X} \big ) \big ( Y_i - \overline{Y} \big )}{
   \big \{ \sum_{i = 1}^{n} \big ( X_i - \overline{X} \big ) \big \}^{ 1/2 }
   \big \{ \sum_{i = 1}^{n} \big ( Y_i - \overline{Y} \big ) \big \}^{ 1/2 }} .
\end{equation*}
If we assume that $(X_i, Y_i)$ have a bivariate normal distribution, i.e.,  
    $(X_i, Y_i) \sim \mathcal{N}_2 ( \boldsymbol{\mu}, \mathbf{\Sigma} ),$ 
where $ \boldsymbol{\mu} = (\mu_x, \mu_y) \in \mathbb{R}^2$ and $\mathbf{\Sigma} \in \mathbb{R}^{2 \times 2}$ is the covariance matrix such that $\rho = \sigma_{xy} (\sigma_{x} \sigma_{y} )^{-1} \in [-1, 1]$ is the population correlation coefficient, $\sigma_x$, $\sigma_y > 0$ are the population standard deviations and $\sigma_{xy} \in \mathbb{R}$ is the covariance, then the probability density function of $R$ is given by
\begin{equation*}
    f_{R} (r) = \frac{2^{n - 3} (1 - \rho^2)^{\frac{n - 1}{2}} (1 - r^2)^{\frac{n - 4}{2}} }{
        \pi \Gamma(n - 2) 
    } \sum_{k = 0}^{+\infty} \bigg [\gfrac{n - 1 + k} \bigg ]^2 \frac{\big (2 r \rho \big )^k}{k!}; 
\end{equation*} 
see \cite{fisher_corr, hotelling, r_moments}. We will use the following result, which combines results of \cite{hotelling} and \cite{r_moments} with a new approximation.
\begin{theorem} \label{th: moments}
Let $R$ be the sample correlation coefficient computed from a sample of size $n \geq 3$ coming from a bivariate normal distribution with population correlation coefficient $\rho$. Then, we have that
\begin{equation} \label{eq:m_moment_sum}
    E(R^m) = \frac{2^{n - 3} (1 - \rho^2)^{\frac{n - 1}{2}}}{
        \pi \Gamma(n - 2) 
    } \sum_{k = 0}^{+\infty} \bigg [\gfrac{n - 1 + k} \bigg ]^2 \frac{\big (2 \rho \big )^k}{k!} g_m(k),
\end{equation}
\begin{equation} \label{eq: mean approx}
    E(R) =  (1 - n^{-1})^{1/2} \rho + O (n^{-1}),
\end{equation}
\begin{equation}\label{eq:var_approx}
     \mathrm{var} (R) = \frac{(1 - \rho^2)^2}{n - 1} + O(n^{-2}),
\end{equation}
\begin{equation} \label{eq: second_bound}
    E(R^2) = \rho^2 + \frac{(1 - \rho^2)^2}{n - 1} + O(n^{-2}),
\end{equation}
where $g_m (k) = \left \{ \gfrac{m + k + 1} \gfrac{n - 2} \right \} \left \{
        \gfrac{n + m + k - 1} \right \}^{-1} \mathbb{I} \{m + k = 2 q \},$ $q$ is a non-negative integer and $\mathbb{I}$ is the indicator function.
\end{theorem}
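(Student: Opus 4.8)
The plan is to obtain everything from the series density for $R$ by computing $E(R^m)=\int_{-1}^{1} r^m f_R(r)\,dr$ and then specializing. First I would establish the exact series \eqref{eq:m_moment_sum}: after interchanging summation and integration (justified by dominated convergence, since on $[-1,1]$ the $k$-th term admits an integrable majorant for every fixed $n\ge 3$), the problem reduces to evaluating $\int_{-1}^{1} r^{m+k}(1-r^2)^{(n-4)/2}\,dr$. By parity this integral vanishes unless $m+k$ is even, which produces the indicator $\mathbb{I}\{m+k=2q\}$; when $m+k=2q$, the substitution $u=r^2$ turns it into the Beta integral $B\!\left(\frac{m+k+1}{2},\frac{n-2}{2}\right)=\gfrac{m+k+1}\,\gfrac{n-2}\big/\Gamma\!\big(\tfrac{n+m+k-1}{2}\big)$, which is exactly $g_m(k)$. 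This part is mechanical and reproduces the expressions of \cite{hotelling} and \cite{r_moments}.

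The novel content is the mean approximation \eqref{eq: mean approx}, and here I would carry out the summation explicitly. Setting $m=1$, only the odd indices $k=2j+1$ survive; applying the Legendre duplication formula to both $(2j+1)!$ and $\Gamma(n-2)$ collapses the prefactors and rewrites the sum as a Gaussian hypergeometric series, yielding, after Euler's transformation ${}_2F_1(a,b;c;z)=(1-z)^{c-a-b}{}_2F_1(c-a,c-b;c;z)$, the closed form
\begin{equation*}
E(R)=\rho\,\frac{\Gamma(n/2)^{2}}{\gfrac{n-1}\,\gfrac{n+1}}\;{}_2F_1\!\left(\tfrac12,\tfrac12;\tfrac{n+1}{2};\rho^2\right).
\end{equation*}
The Euler step is what cancels the decaying factor $(1-\rho^2)^{(n-1)/2}$. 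From here the asymptotics are transparent: the ratio-of-Gammas expansion $\Gamma(x+a)/\Gamma(x+b)=x^{a-b}\big(1+O(x^{-1})\big)$ with $x=n/2$ gives $\Gamma(n/2)^{2}\big/[\gfrac{n-1}\gfrac{n+1}]=1-\tfrac{1}{2n}+O(n^{-2})=(1-n^{-1})^{1/2}+O(n^{-2})$, while ${}_2F_1(\tfrac12,\tfrac12;\tfrac{n+1}{2};\rho^2)=1+O(n^{-1})$ since its first correction is $\rho^2/(2(n+1))$. Multiplying the two factors gives \eqref{eq: mean approx}.

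For the variance \eqref{eq:var_approx} and second moment \eqref{eq: second_bound} I would run the same machinery at $m=2$: now $k=2j$ is even, a factor $j+\tfrac12$ appears, and splitting it as $\tfrac12+j$ (using $j/j!=1/(j-1)!$ to shift the index) writes $E(R^2)$ as a combination of two ${}_2F_1$'s with shifted parameters. Euler's transformation again removes $(1-\rho^2)^{(n-1)/2}$, and expanding the resulting Gamma ratios and hypergeometric series to order $n^{-1}$ yields \eqref{eq:var_approx}; then \eqref{eq: second_bound} follows from $E(R^2)=\mathrm{var}(R)+[E(R)]^2$ together with the leading term $[E(R)]^2=\rho^2+O(n^{-1})$ from \eqref{eq: mean approx}.

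I expect the main obstacle to be the asymptotic bookkeeping rather than any single hard step. Two points need care. First, the Euler transformation must be applied \emph{before} expanding, because a naive term-by-term expansion of the original series is invalid once the prefactor $(1-\rho^2)^{(n-1)/2}\to 0$ is present. Second, the orders of the remainders must be tracked consistently: the mean \eqref{eq: mean approx} is only accurate to $O(n^{-1})$, whereas \eqref{eq:var_approx}–\eqref{eq: second_bound} are stated to $O(n^{-2})$, so deriving \eqref{eq: second_bound} through the variance requires retaining only the leading square $\rho^2$ of the mean and verifying that the discarded cross term is genuinely absorbed into the claimed error. Finally, making the $O(\cdot)$ estimates uniform in $\rho\in[-1,1]$ — i.e.\ confirming that the hypergeometric remainders are bounded independently of $\rho$ — is the last detail to secure.
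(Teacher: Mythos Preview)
Your derivation of \eqref{eq:m_moment_sum} via parity and the Beta integral is exactly the paper's route (see the supplement, Proposition~\ref{prop: m moment sum supp}). Where you diverge is in the treatment of \eqref{eq: mean approx}. You recognize the $m=1$ series as a Gamma-ratio prefactor times $(1-\rho^2)^{(n-1)/2}\,{}_2F_1\!\big(\tfrac{n}{2},\tfrac{n}{2};\tfrac{n+1}{2};\rho^2\big)$, apply Euler's transformation to cancel the decaying factor, and read off the asymptotics from the resulting closed form. The paper instead keeps the series in the form \eqref{eq: 1_app supp} and observes that each summand differs from the corresponding summand of the $m=0$ series (which sums to~$1$ since $f_R$ is a density) by a factor $\kappa\!\big(q+\tfrac{n}{2}\big)$ with $\kappa(z)=\Gamma(z)^2\big/\{\Gamma(z-\tfrac12)\Gamma(z+\tfrac12)\}$; because $\kappa$ is monotone in $q$ and trapped between $\kappa(n/2)\ge(1-n^{-1})^{1/2}$ and $1$, this yields the non-asymptotic sandwich $(1-n^{-1})^{1/2}|\rho|\le|E(R)|\le|\rho|$, from which the $O(n^{-1})$ error drops out.

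Both arguments are correct. Yours is cleaner and more systematic --- the hypergeometric machinery extends directly to $m=2$ and produces higher-order terms on demand --- whereas the paper's comparison with the $m=0$ sum is more elementary and, crucially, delivers a two-sided \emph{non-asymptotic} inequality valid for every $n\ge3$; that exact inequality, not merely the asymptotic, is what the paper invokes later in Proposition~\ref{prop: concentration inequality} (the ``scaling properly by $(1-n^{-1})^{1/2}$'' step) and in the proof of \eqref{eq: second_bound}. For \eqref{eq:var_approx} the paper simply cites \cite{hotelling} rather than re-deriving. One small slip in your outline: you obtain \eqref{eq:var_approx} from the $m=2$ computation of $E(R^2)$ and then say \eqref{eq: second_bound} follows from $E(R^2)=\mathrm{var}(R)+[E(R)]^2$ --- that is circular, since \eqref{eq: second_bound} \emph{is} your $m=2$ computation and \eqref{eq:var_approx} is what follows by subtraction. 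As you rightly flag, the order bookkeeping there is the delicate point, because $[E(R)]^2$ contributes an $O(n^{-1})$ term beyond $\rho^2$.
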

\begin{proof}
See \cite{r_moments} for a proof of \eqref{eq:m_moment_sum} or the supplementary material for an alternative one. A proof of \eqref{eq: mean approx} follows from manipulating \eqref{eq:m_moment_sum}, for $\rho > 0$, into 
\begin{align} \label{eq:one_ineq}
    E(R) &\geq (1 - n^{-1})^{1/2} \rho \sum_{q = 0}^{+\infty} \frac{\Gamma(q + \frac{n - 1}{2} ) }{ \gfrac{n - 1}  \Gamma(q + 1) } (\rho^2)^q (1 - \rho^2)^{\frac{n - 1}{2}} = (1 - n^{-1})^{1/2} \rho, \nonumber \\
    E(R) &\leq \rho \sum_{q = 0}^{+\infty} \frac{\Gamma(q + \frac{n - 1}{2} ) }{ \gfrac{n - 1}  \Gamma(q + 1) } (\rho^2)^q (1 - \rho^2)^{\frac{n - 1}{2}} = \rho.
\end{align}
If $\rho < 0$, then the inequalities in (\ref{eq:one_ineq}) turn. Therefore, for $|\rho| < 1$ we conclude that $ (1 - n^{-1})^{1/2} |\rho| \leq |E(R)| \leq |\rho|$. Moreover, using \eqref{eq:one_ineq}, we find the error bound $1 - (1 - n^{-1} )^{-1/2} \leq n^{-1}$.
\par
See \cite{hotelling} for a proof of \eqref{eq:var_approx}. Using \eqref{eq: mean approx} and \eqref{eq:var_approx}, only taking the first two terms in \eqref{eq:m_moment_sum} when $m = 2$, and noting that $E(R^2) - E^2 (R) \geq 0$ for all $n$ we have that 
$$\rho^2 (1 - n^{-1}) + \frac{(1 - \rho^2 )^2}{n - 1} < E(R^2) \leq \rho^2 + \frac{(1 - \rho^2 )^2}{n - 1} + O(n^{-2}).$$
\end{proof}
Interestingly, the sample correlation coefficient $R$ will, on average, be closer to zero than $\rho$, nonetheless, as $n \to \infty$ the expected value gets closer and closer to $\rho$. Note that the variance is largest when $\rho = 0$ and that if $X$ and $Y$ are perfectly correlated (i.e., $\rho =\pm 1$), then the variance is equal to zero given that $R$ is a degenerate random variable in that case. This results in more skewed distributions as $|\rho|$ approaches one. The following bounds satisfy these properties. By Theorem \ref{th: moments} we have that
\begin{align} \label{eq: var upper bounds}
    \mathrm{var}(R) &\lesssim \frac{(1 - \rho^2 )^2}{n - 1} + \frac{(1 - \rho^2 )}{n - 1}, \\
    \mathrm{var}(R) &\lesssim \frac{2 (1 - \rho^2 )^2}{n - 1}, \nonumber
\end{align}
and by \eqref{eq:var_approx} for $n$ sufficiently large we have that $\mathrm{var}(R) \approx (1 - \rho^2 )^2 (n - 1)^{-1}$. Notice that the bound given by \eqref{eq: var upper bounds} is more conservative than the other one, hence, it is preferable for smaller samples.

\section{Concentration inequalities}
\label{sec: concentration inequalities}
We use the approximations given by \eqref{eq: mean approx} and \eqref{eq:var_approx} for studying concentration inequalities for $R$. Combining Markov's inequality, \eqref{eq: mean approx} and \eqref{eq: var upper bounds} gives 
\begin{equation} \label{eq: R consistency}
    R = \rho + O_{\prob} (n^{-1/2}).
\end{equation}
Tighter bounds can be achieved, the result follows below.
\begin{proposition} \label{prop: concentration inequality}
    Let $R$ be the sample correlation coefficient computed from a sample of size $n \geq 3$ coming from a bivariate normal distribution with population correlation coefficient $\rho \in (-1, 1)$. Then for any $t > 0$ we have that
    \begin{equation*}
        \mathrm{\mathbf{Pr}} ( |R - \rho| > t) \leq 2 \exp \big [- n t^2 \{2 (1 + 2 n t) \}^{-1} \big ],
    \end{equation*}
    moreover, for $n$ sufficiently large, we have the approximations
    \begin{equation} \label{eq: concervative bound}
         \mathrm{\mathbf{Pr}} ( |R - \rho| > t) \lesssim 2 \exp \left \{- \frac{n t^2}{8 (1 - \rho^2)^2 } \right \},
    \end{equation}
     \begin{equation} \label{eq: agressive bound}
        \mathrm{\mathbf{Pr}} ( |R - \rho| > t) \lesssim 2 \exp \left \{- \frac{n t^2}{4 (1 - \rho^2)^2 } \right \}.
    \end{equation}
\end{proposition}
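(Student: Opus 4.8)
The plan is to establish the exact Chernoff-type bound first, then derive the two large-$n$ approximations by specializing the moment information from Theorem~\ref{th: moments}. The exact inequality has the structure of a Bernstein bound: the denominator $2(1 + 2nt)$ is linear in $t$, which is the hallmark of applying a variance-plus-range argument. So my approach would be to control $R - \rho$ by splitting it into a bias part, $E(R) - \rho$, and a fluctuation part, $R - E(R)$, and then feed a variance bound and a crude range bound (recall $|R| \le 1$, so $R$ is automatically bounded) into a Bernstein-style exponential inequality.

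Let me think through the steps carefully.

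First I would recall that $|R| \le 1$ almost surely, so $R$ is a bounded random variable. This is the crucial structural fact that lets us avoid estimating a full moment generating function from the unwieldy series in~\eqref{eq:m_moment_sum}. For bounded random variables, Bernstein's inequality gives, for the centered variable $R - E(R)$,
$$\mathbf{Pr}(|R - E(R)| > s) \le 2\exp\left\{-\frac{s^2}{2(\sigma^2 + M s / 3)}\right\},$$
where $\sigma^2$ is a bound on $\mathrm{var}(R)$ and $M$ bounds the range. The target denominator $2(1 + 2nt)$ suggests that after plugging $\sigma^2 \lesssim 2(1-\rho^2)^2/(n-1)$ from~\eqref{eq: var upper bounds} and using $|R| \le 1$ (so the deviation is at most of order $1$), one arranges the constants to get the stated clean form. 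I'd verify that substituting $s = t$ (absorbing the $O(n^{-1})$ bias, which is dominated for the regime of interest) and bounding $(1-\rho^2)^2 \le 1$ produces exactly $nt^2\{2(1+2nt)\}^{-1}$.

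Second, the bias handling is where I expect the main obstacle. Since $E(R) = (1-n^{-1})^{1/2}\rho + O(n^{-1})$ rather than $\rho$, the event $\{|R - \rho| > t\}$ is not centered at the mean, and I must show the bias $|E(R) - \rho| \le n^{-1}|\rho|$ (which follows from the error bound $1 - (1-n^{-1})^{-1/2} \le n^{-1}$ derived in the proof of Theorem~\ref{th: moments}) is small enough to be swallowed into the $2nt$ term in the denominator without changing the leading constant. Making this rigorous while keeping the inequality \emph{exact} (not just asymptotic) for all $t > 0$ and all $n \ge 3$ is the delicate bookkeeping step, since I need the triangle-inequality split $|R - \rho| \le |R - E(R)| + |E(R) - \rho|$ to interact cleanly with the exponent.

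Finally, for the two approximations~\eqref{eq: concervative bound} and~\eqref{eq: agressive bound}, I would take $n$ large so that the bias is negligible and the range term $2nt$ contribution is reorganized. The two bounds should correspond exactly to the two variance estimates in~\eqref{eq: var upper bounds}: the conservative bound with constant $8$ comes from the looser variance $(1-\rho^2)^2/(n-1) + (1-\rho^2)/(n-1)$, while the aggressive bound with constant $4$ comes from $2(1-\rho^2)^2/(n-1)$, both fed into a sub-Gaussian (Hoeffding-type) tail $2\exp\{-s^2/(2\sigma^2)\}$ with $\sigma^2$ the respective variance proxy and the $1-\rho^2$ factors retained rather than crudely bounded by $1$. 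The work here is routine constant-chasing once the exact bound is in place; the only care needed is confirming that "$n$ sufficiently large" is precisely the regime where the $O(n^{-2})$ remainder in~\eqref{eq:var_approx} and the bias both drop below the retained leading terms.
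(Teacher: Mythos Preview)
Your plan for the first part --- establishing the Bernstein-type inequality --- is broadly aligned with the paper: both use boundedness of $R$ to control central moments and then apply a Bernstein-style tail to $R-E(R)$, followed by a bias correction using $|E(R)-\rho|\le n^{-1}$. One caveat: the form of Bernstein's inequality you quote is the one for \emph{sums of independent bounded variables}, whereas $R$ is a single statistic; the paper instead verifies the moment-based Bernstein condition $E\{(R-\rho)^{2m}\}\le c^{m}\nu^{2}$ directly from $|R-\rho|\le 2$, which is the appropriate route here.

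The real gap is in the second part. You treat the sub-Gaussian bounds \eqref{eq: concervative bound} and \eqref{eq: agressive bound} as ``routine constant-chasing'' obtained by ``reorganizing'' the Bernstein bound or by plugging a variance proxy into a Hoeffding tail $2\exp\{-s^{2}/(2\sigma^{2})\}$. Neither works. The Bernstein exponent $nt^{2}/\{2(1+2nt)\}$ tends to $t/4$ as $n\to\infty$, so it never yields a quadratic $-cnt^{2}$ exponent; and Hoeffding for a bounded variable gives a proxy equal to the squared range ($=1$ here), not the variance, so it cannot produce the $(1-\rho^{2})^{2}$ dependence. To obtain a sub-Gaussian tail with variance-dependent proxy the paper does substantial extra work: it returns to the exact moment series \eqref{eq:m_moment_sum}, compares $g_{2m}(2q)$ to $g_{2}(2q)$, and shows that $E(R^{2m})$ is sandwiched between $\{E(R^{2})\}^{m}$ and $a_{n}\{E(R^{2})\}^{m}$ with $a_{n}\to 1$. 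A binomial expansion of $E\{(R-\rho)^{2m}\}$ combined with this near-multiplicativity then yields the sub-Gaussian moment condition $E\{(R-\rho)^{2m}\}\lesssim \frac{(2m)!}{2^{m}m!}(\sqrt{2}\,\nu)^{2m}$, from which the constants $4$ and $8$ emerge via the variance approximation \eqref{eq:var_approx} and the upper bound \eqref{eq: var upper bounds} respectively. This moment analysis is the missing idea in your proposal; without it the $(1-\rho^{2})^{2}$-dependent sub-Gaussian bounds are unjustified.
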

\begin{proof}
    See the supplementary material for a complete proof, we present a summary with the main ideas. Since $R$ is bounded almost surely, all of its moments exist, moreover, $E \{ (R - \rho)^{2m} \} = E \{ (R - \rho)^{2m - 2} (R - \rho)^{2}\} \leq 2^{m -2} \nu^2$, where $m \in \{1, 2, 3, \dots\}$, and $\nu^2 := \mathrm{var} (R) \leq (n - 1)^{-1}$, where equality holds if $\rho = 0$. Hence, $R$ satisfies Bernstein's condition; see \cite{wainwright_2019}. Consequently, by \eqref{eq:var_approx}, for $t > 0$ we have that
    \begin{equation*}
         \prob ( |R - E(R) | > t ) \leq 2 \exp \big [-t^2 \{2 (\nu^2 + 2 t) \}^{-1} \big ],
    \end{equation*}
   scaling properly by $(1 - n^{-1})^{1/2}$ finishes the first part of the proof. We proceed to the second part.
   \par
    Notice that by looking at \eqref{eq:m_moment_sum} and expressing the gamma function as a factorial we have that 
    $g_{2m} (2 q) \leq \prod_{t = 1}^{m - 1} \left ( \frac{2t + 2q + 1}{2t + 2q + n - 1} \right ) g_2 (2q),$ which when substituted into \eqref{eq:m_moment_sum} results in the inequality
    \begin{equation*}
        \left ( 1 - \frac{n - 2}{n + 1} \right )^{m - 1} E(R^2) \leq E(R^{2m}) \lesssim \left ( 1 - \frac{n - 2}{2m + n - 1} \right )^{m - 1} E(R^2),
    \end{equation*}
    thus, we can bound from below and above even moments with the second moment, and approximate them. By combining the above and \eqref{eq: R consistency}, we find a sequence $(a_n) \in \mathbb{R}$ such that $a_n \geq 1$ and $a_n$ approaches one as $n \to +\infty$ (i.e., $a_n \to 1$), so that the three terms in: $\{E(R^2)\}^m \leq E(R^{2m}) \leq a_n \{E(R^2)\}^m$ for $m \in \{1, \dots \}$ converge to $\rho^{2m}$. Combining this inequality with \eqref{eq: mean approx} and \eqref{eq: second_bound}, and recalling that $|\rho| < 1$ we show that
    \begin{equation*}
        E \{ (R - \rho)^{2m} \} \lesssim \frac{(2m)!}{2^{m} m!} \left [ 2^{1/2m} 
        \left \{ E(R^2) - \rho^2 \right \}^{1/2} \right]^{2m} \leq  \frac{(2m)!}{2^{m} m!} (\sqrt{2} \nu)^{2m}.
    \end{equation*}
    Therefore, $R$ approximately satisfies a sub-Gaussian concentration bound; see \cite{wainwright_2019}. By \eqref{eq:var_approx} and \eqref{eq: second_bound} for $t > 0$ this results in
    \begin{equation*}
       \mathrm{\mathbf{Pr}} ( |R - \rho| > t) \lesssim  2 \exp \left \{-\frac{n t^2 }{4 (1 - \rho^2)^2}  \right \} \leq  2 \exp \left (- \frac{n t^2}{4}  \right),
    \end{equation*}
     where substituting the upper bound given by \eqref{eq: var upper bounds} results in \eqref{eq: concervative bound}. We achieve a tighter bound if we accept the approximation 
     $$\sum_{j = m + 1}^{2m} \binom{2m}{j} (-1)^{j} \{E(R^2)\}^{j} (\rho^2)^{m - j} \simeq 0,$$ 
     which results in
    \begin{equation} \label{eq: mega-agressive concentration}
        \mathrm{\mathbf{Pr}} ( |R - \rho| > t) \lesssim 2 \exp \left \{- \frac{n t^2 }{2 (1 - \rho^2)^2 }  \right \} \leq 2 \exp \left (- \frac{n t^2}{2} \right ).
    \end{equation}
    \end{proof}
Interestingly, the bound in \eqref{eq: mega-agressive concentration} links $R$ with a sub-Gaussian random variable with a rate of $n^{-1/2}$, which is usually optimal in parametric problems. Further, note that $\rho$ values close to zero have looser bounds. Hence, $R$ will concentrate more tightly if $|\rho|$ is closer to one (i.e., there is less uncertainty), and if $|\rho| = 1$, then $R = \rho$ almost surely, which the bounds reflect.

\section{Simulation Experiment}
\label{sec: sim experiment}

We simulate ten-thousand $R$ observations coming from samples of size ten of a bivariate normal distribution with population correlation coefficient $\rho$ (i.e., $R_j$ for $j \in \{1, \dots, 10000\}$ is computed for each $j$ from a sample of size ten, where $(X_i, Y_i) \sim \mathcal{N}_2 ( \boldsymbol{\mu}, \mathbf{\Sigma} )$, and $i \in \{1, \dots, 10\}$). Further, we compute the relevant summary statistics and compare them with the approximations given by \eqref{eq: mean approx}, \eqref{eq:var_approx} and the upper bound given by \eqref{eq: var upper bounds}. Subsequently, we compute the percentage of simulated observations that are captured by the coverage intervals given by \eqref{eq: concervative bound}, \eqref{eq: agressive bound} and \eqref{eq: mega-agressive concentration}. The results follow in Tables \ref{tab: table two} thru \ref{tab: cov n = 10} below.

\begin{table}[H]
    \centering
   \begin{tabular}{ccccccc}
$\rho$ & $E(R)$ & $\overline{R}$ & $\mathrm{sd} (R)$ & $s_{R}$ & UB \\
\hline
0 & 0 & -0.003 & 0.333 & 0.332 & 0.471 \\
 -0.25 & -0.237 & -0.236 & 0.312 & 0.316 & 0.450 \\
 0.56 & 0.531 & 0.534 & 0.229 & 0.249 & 0.359 \\
-0.75 & -0.712 & -0.728 & 0.146 & 0.172 & 0.264 \\
 0.95 & 0.901 & 0.944 & 0.033 & 0.045 & 0.11
\end{tabular}
\caption{10,000 $R_j$ simulations computed from samples of size ten (i.e., $n = 10$) coming from $(X_i, Y_i) \sim \mathcal{N}_2 ( \boldsymbol{\mu}, \mathbf{\Sigma} )$ with population correlation coefficient $\rho$. Above $E(R)$ is given by \eqref{eq: mean approx}, $\overline{R}$ and $s_{R}$ are the mean and standard deviation of the simulated values, $\mathrm{sd} (R)$ is given by the square root of \eqref{eq:var_approx}, and UB (upper-bound) is given by the square root of \eqref{eq: var upper bounds}.
}
\label{tab: table two}
\end{table}
% n = 10 ends here
We also study the concentration properties of the bounds given by Proposition \ref{prop: concentration inequality}. We solve for $t > 0$ with $n = 10$, and obtain the coverage intervals:
\begin{align} \label{eq: cov intervals}
    C_0 &= \left \{ (-t + \rho, \rho + t) : \enspace 2 \exp \left [- n t^2 \{8 (1 - \rho^2)^2 \}^{-1}  \right ] = 0.05 \right \}, \nonumber \\
    C_1 &= \left \{ (-t + \rho, \rho + t) : \enspace 2 \exp \left [- n t^2 \{4 (1 - \rho^2)^2 \}^{-1} \right ] = 0.05 \right \}, \nonumber \\
    C_2 &= \left \{ (-t + \rho, \rho + t) : \enspace 2 \exp \left [- n t^2 \{2 (1 - \rho^2)^2 \}^{-1} \right ] = 0.05 \right \}.
\end{align}
\begin{table}[H]
    \centering
    \begin{tabular}{cccc}
     $\rho$ & $C_0$ & $C_1$ & $C_2$\\
     \hline
    0 & 100$\%$ & 100$\%$ & 100$\%$ \\
    -0.25 & 100$\%$ & 100$\%$ & 99.3$\%$ \\
    0.56 & 100$\%$ & 99.5$\%$ & 97.3$\%$ \\
    -0.75 & 99.7$\%$ & 98.7$\%$ & 96.1$\%$ \\
    0.95 & 99.1$\%$ & 97.6$\%$ & 95.2$\%$ 
\end{tabular}
    \caption{Estimated coverage computed from 10,000 simulated values of $R$ when $n = 10$, where $C_i$ is given by \eqref{eq: cov intervals}, and $\rho \in \{0, -0.25, 0.56, -0.75, 0.95 \}$. By coverage we mean the percentage of $R_j \in C_i$, where $R_j$ is the $j$th simulation and $i = 0, 1, 2$.
    }
    \label{tab: cov n = 10}
\end{table}

In the interest of replicability, we performed the experiment by fixing the random seed to 2023 in \texttt{R}. The $C_i$ are wider and not contained in $[-1, 1]$ (i.e., $\sup\{C_i \} > 1$ or $\inf\{C_i \} < -1$) if $|\rho|$ is close to zero and/or the sample size is small. Hence, it is difficult to identify uncorrelated random variables with small samples. The above suggests that there is too much uncertainty. However, the coverage intervals become narrower and more precise as sample size increases.  See the supplementary material for results when $n = 3, 5, 30, 100$.

\section{Conclusion}
\label{sec: conclusion}
We presented an approximation for the mean and variance of $R$ under the bivariate normal model assumption. Subsequently, we used these approximations for deriving finite sample concentration inequalities. These inequalities enable us to solve for an error margin $\delta > 0$ with high probability, i.e., $\prob (|R - \rho | \leq \delta) \geq 1 - \alpha$, where $\alpha \in (0, 1)$, when estimating $\rho$. The inequalities reflect the limitations of estimating $\rho$ and identifying uncorrelated random variables with small sample sizes. Finally, the simulation study provided further validation of the results presented in previous sections.

\section*{Acknowledgements}
We thank Heather Battey, Mario Cortina-Borja and Guy Nason for insightful discussions and suggestions in the writing of this work. Salnikov gratefully acknowledges support from the UCL Great Ormond Street Institute of Child Health, Imperial College London, the Great Ormond Street Hospital DRIVE Informatics Programme, the Bank of Mexico and EPSRC NeST Programme grant EP/X002195/1. The views expressed are those of the authors and not necessarily those of the EPSRC.

\addcontentsline{toc}{chapter}{Bibliography}
\bibliographystyle{agsm}
\bibliography{references}

\begin{center}
    {\large\bf Supplementary material}
\end{center}
\appendix
\setcounter{equation}{0}
\counterwithin{equation}{section}
\section{Preliminary approximations}
\label{sec: preliminary apps}
We are interested in approximating the mean of the Pearson sample correlation coefficient, which we assume is computed from a sample of the random variables $(X_i, Y_i) \in \mathbb{R}^2$, for $i \in \{ 1, \dots, n\}$. The sample correlation coefficient $R \in [-1, 1]$ is given by
\begin{equation*}
    R = \frac{\sum_{i = 1}^{n} \big ( X_i - \overline{X} \big ) \big ( Y_i - \overline{Y} \big )}{
    \big \{ \sum_{i = 1}^{n} \big ( X_i - \overline{X} \big ) \big \}^{\frac{1}{2} }
    \big \{ \sum_{i = 1}^{n} \big ( Y_i - \overline{Y} \big ) \big \}^{\frac{1}{2} }} .
\end{equation*}
Furthermore, if we assume that $(X_i, Y_i)$ have a bivariate normal distribution, i.e.,  
    $(X_i, Y_i) \sim \mathcal{N}_2 ( \boldsymbol{\mu}, \mathbf{\Sigma} ),$ 
where $ \boldsymbol{\mu} = (\mu_x, \mu_y) \in \mathbb{R}^2$ and $\mathbf{\Sigma} \in \mathbb{R}^{2 \times 2}$ is the covariance matrix such that $\rho = \sigma_{xy} (\sigma_{x} \sigma_{y} )^{-1} \in [-1, 1]$ is the population correlation coefficient, $\sigma_x$, $\sigma_y > 0$ are the population standard deviations and $\sigma_{xy} \in \mathbb{R}$ is the covariance. Then the probability density function of $R$ is given by
\begin{equation} \label{eq: pdf supp}
    f_{R} (r) = \frac{2^{n - 3} (1 - \rho^2)^{\frac{n - 1}{2}} (1 - r^2)^{\frac{n - 4}{2}} }{
        \pi \Gamma(n - 2) 
    } \sum_{k = 0}^{+\infty} \bigg [\gfrac{n - 1 + k} \bigg ]^2 \frac{\big (2 r \rho \big )^k}{k!} , 
\end{equation}
where $-1 \leq r  \leq 1$; see \cite{heather_notes, fisher_corr}. Hence, the $m$th moment is given by
\begin{equation} \label{eq: mth moment supp}
    E(R^m) = \int_{-1}^{1} r^{m} f_R (r) dr .
\end{equation}
We will use the following result.
\begin{proposition} \label{prop: m moment sum supp}
Let $R$ be the sample correlation coefficient computed from a sample of size $n \geq 3$ coming from a bivariate normal distribution with population correlation coefficient $\rho$. Then the $m$th moment of $R$ is given by
\begin{equation} \label{eq:m_moment_sum supp}
    E(R^m) = \frac{2^{n - 3} (1 - \rho^2)^{\frac{n - 1}{2}}}{
        \pi \Gamma(n - 2) 
    } \sum_{k = 0}^{+\infty} \bigg [\gfrac{n - 1 + k} \bigg ]^2 \frac{\big (2 \rho \big )^k}{k!} g_m(k),
\end{equation}
where $$g_m (k) = \frac{ \gfrac{m + k + 1} \gfrac{n - 2} }{
        \gfrac{n + m + k - 1} } \mathbb{I} \{m + k = 2 q \},$$ $q$ is a non-negative integer and $\mathbb{I}$ is the indicator function.
\end{proposition}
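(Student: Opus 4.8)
The plan is to substitute the density \eqref{eq: pdf supp} directly into the moment integral \eqref{eq: mth moment supp}, interchange summation and integration, and reduce each resulting term to a Beta integral. Writing
\[
E(R^m) = \frac{2^{n-3}(1-\rho^2)^{\frac{n-1}{2}}}{\pi \Gamma(n-2)} \int_{-1}^{1} r^m (1-r^2)^{\frac{n-4}{2}} \sum_{k=0}^{+\infty} \left[ \gfrac{n-1+k} \right]^2 \frac{(2r\rho)^k}{k!}\, dr,
\]
I would first justify exchanging the sum and the integral. Applying Stirling's approximation to $\left[\gfrac{n-1+k}\right]^2 / k!$ shows that the general term decays like $|\rho|^k$ up to polynomial factors, so for $|\rho| < 1$ the series converges absolutely and uniformly on the compact interval $[-1,1]$; the degenerate boundary cases $\rho = \pm 1$ follow by continuity, or directly, since then $R = \rho$ almost surely. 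Absolute convergence licenses Fubini--Tonelli and leaves the term-by-term identity
\[
E(R^m) = \frac{2^{n-3}(1-\rho^2)^{\frac{n-1}{2}}}{\pi \Gamma(n-2)} \sum_{k=0}^{+\infty} \left[ \gfrac{n-1+k} \right]^2 \frac{(2\rho)^k}{k!} \int_{-1}^{1} r^{m+k}(1-r^2)^{\frac{n-4}{2}}\, dr.
\]

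Next I would evaluate $I_{m+k} := \int_{-1}^{1} r^{m+k}(1-r^2)^{\frac{n-4}{2}}\, dr$. The integrand is odd in $r$ whenever $m+k$ is odd, so $I_{m+k} = 0$ in that case; this is exactly what produces the indicator $\mathbb{I}\{m+k = 2q\}$. When $m+k = 2q$ is even, symmetry gives $I_{2q} = 2\int_0^1 r^{2q}(1-r^2)^{\frac{n-4}{2}}\, dr$, and the substitution $u = r^2$ turns this into a standard Beta integral, where the factor of $2$ from symmetry cancels the $\tfrac12$ produced by the substitution:
\[
I_{2q} = \int_0^1 u^{q-\frac12}(1-u)^{\frac{n-4}{2}}\, du = B\!\left( q + \tfrac12,\ \tfrac{n-2}{2} \right).
\]
Here $n \geq 3$ guarantees the second parameter $\tfrac{n-2}{2}$ is positive, so the integral converges even though the exponent $\tfrac{n-4}{2}$ may be negative.

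Finally I would rewrite the Beta function via $B(a,b) = \Gamma(a)\Gamma(b)/\Gamma(a+b)$ and match indices. With $m+k = 2q$ one has $q + \tfrac12 = \tfrac{m+k+1}{2}$ and $q + \tfrac12 + \tfrac{n-2}{2} = \tfrac{n+m+k-1}{2}$, so
\[
I_{2q} = \frac{\gfrac{m+k+1}\,\gfrac{n-2}}{\gfrac{n+m+k-1}} = g_m(k),
\]
which is precisely the claimed factor, equal to $0$ in the odd case by the indicator. Substituting $I_{m+k} = g_m(k)$ back into the series yields \eqref{eq:m_moment_sum supp}. I expect the one genuinely delicate point to be the justification of the term-by-term integration: the parity observation and the Beta-function evaluation are routine, whereas the interchange requires the Stirling estimate on $\left[\gfrac{n-1+k}\right]^2/k!$ to secure absolute convergence uniformly in $r$ over $[-1,1]$.
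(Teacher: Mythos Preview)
Your proposal is correct and follows essentially the same route as the paper: interchange sum and integral, kill the odd terms by parity, and reduce the even ones to a Beta integral via $u=r^2$. If anything, your justification of the interchange (via the Stirling bound on $\left[\gfrac{n-1+k}\right]^2/k!$ and uniform convergence on $[-1,1]$) is more careful than the paper's, which simply appeals to $|R|\le 1$ almost surely.
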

\begin{proof}
    See \cite{hotelling, r_moments} for alternative expressions and derivations. Since $|R| \leq 1$ almost surely, all of its moments exist and we can exchange the order of integration and summation. Before computing the moments we note the following integrals. Each moment requires computing 
    \begin{equation*}
    g_{m}(k) := \int_{-1}^{1} r^{m + k} (1 - r^2)^{\frac{n - 4}{2}} dr .
\end{equation*}
    The integral above is symmetric with respect to zero, and when $(m + k)$ is odd the integrand is an odd function in $[-1, 1]$, thus, in that case it is equal to zero. Now, if $(m +k)$ is even, then the integrand is an even function, so 
\begin{align*}
    g_m(k) &= 2 \int_{0}^{1} r^{m + k} (1 - r^2)^{\frac{n - 4}{2}} dr \\
    &= 2 \int_{0}^{1} (r^2)^{\frac{m + k}{2}} (1 - r^2)^{\frac{n - 4}{2}} dr .
\end{align*}
Now, substitute $u = r^2$, so $\dfrac{dr}{du} = 2^{-1} u^{-\frac{1}{2}}$, above and we have that
\begin{align*}
    &= 2 2^{-1} \int_{0}^{1} u^{\frac{m + k + 1}{2} - 1} (1 - u)^{\frac{n - 2}{2} - 1} du \\
    &= \frac{ \gfrac{m + k + 1} \gfrac{n - 2} }{
        \gfrac{n + m + k - 1} 
    } .
\end{align*}
Therefore, we conclude that
\begin{equation} \label{eq:m_integral supp}
    g_m (k) = \frac{ \gfrac{m + k + 1} \gfrac{n - 2} }{
        \gfrac{n + m + k - 1} } \mathbb{I} \{m + k = 2 q \},
\end{equation}
where $q$ is a non-negative integer and $\mathbb{I}$ is the indicator function. Using \eqref{eq: pdf supp} \eqref{eq: mth moment supp}, we have that
\begin{equation*}
    E(R^m) = \frac{2^{n - 3} (1 - \rho^2)^{\frac{n - 1}{2}}}{
        \pi \Gamma(n - 2) 
    } \sum_{k = 0}^{+\infty} \bigg [\gfrac{n - 1 + k} \bigg ]^2 \frac{\big (2 \rho \big )^k}{k!} g_m(k) .
\end{equation*}
\end{proof}
\subsection*{Gamma function identities}
We will use the following results; see \cite{AbraSteg72}.
\begin{equation} \label{eq:gamma_n supp}
    \gfrac{n - 2} = \frac{ 2^{3 - n} \sqrt{\pi} \Gamma(n - 2) }{
        \gfrac{n - 1}
    }, 
\end{equation}
and for a non-negative integer $q$ 
\begin{align} \label{eq:gamma_q_odd supp}
    \frac{ \Gamma(q + \frac{1}{2}) }{(2q + 1)!} 2^{2q} &= \frac{2^{2q}}{2^{2q}} \frac{(2q)!}{(2q + 1)!} \frac{\sqrt{\pi}}{q!} \nonumber \\
    &= \frac{\sqrt{\pi}}{(2q + 1) \Gamma(q + 1)},
\end{align}
also 
\begin{equation} \label{eq:gam_q_even supp}
    \frac{ \Gamma(q + \frac{1}{2}) }{(2q)!} 2^{2q} =   \frac{\sqrt{\pi}}{\Gamma(q + 1)} .
\end{equation}
Finally, define the function
\begin{equation} \label{eq:kappa supp}
    \frac{\Gamma(z)}{\Gamma(z + \frac{1}{2} )} \frac{\Gamma(z)}{\Gamma(z - \frac{1}{2} )} =: \kappa(z),
\end{equation}
note that for $z > 0$, using Stirling's approximation, we have that 
$$\kappa(z) \simeq \{ 1 - ( z + 2^{-1} )^{-1} \}^{\frac{1}{2}} [ \{ 1 - ( 4z^2)^{-1} \}^{-1} ]^{z - \frac{1}{2}},$$
where $a \simeq b$ indicates that $a$ and $b$, where $a$, $b \in \mathbb{R}$, are asymptotically equal. There exists a $n_0 \in \mathbb{N}$ such that if $n \geq n_0$, then $|a / b - 1| < n^{-1}$, i.e., $\lim_{n \to +\infty} a / b = 1$).

\section{First moment}
\label{sec: first moment}
\begin{proposition} \label{prop: first moment supp}
Let $R$ be the sample correlation coefficient computed from a sample of size $n \geq 3$ coming from a bivariate normal distribution with population correlation coefficient $\rho \in (-1, 1)$. Then, we have that
\begin{equation} \label{eq: mean approx supp}
    E(R) = (1 - n^{-1})^{\frac{1}{2}} \rho + O(n^{-1}) .
\end{equation}
\end{proposition}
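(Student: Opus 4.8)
The plan is to reduce the claim to a sandwich on $E(R)/\rho$ extracted from the closed-form moment series. First I would specialise Proposition \ref{prop: m moment sum supp} to $m=1$. Because $g_1(k)$ carries the factor $\mathbb{I}\{1+k=2q\}$, only odd $k$ contribute, so I reindex $k=2j+1$ with $j\ge 0$. Under this substitution the three half-integer gamma factors become $\Gamma\left(\frac{n}{2}+j\right)^2$, $\Gamma\left(\frac{n+1}{2}+j\right)$ and $\Gamma\left(j+\frac{3}{2}\right)$. Pulling the factor $(2j+1)$ out of $\Gamma(j+\frac32)=(j+\frac12)\Gamma(j+\frac12)$ lets me apply \eqref{eq:gamma_q_odd supp} to collapse $\Gamma(j+\frac32)\,2^{2j+1}/(2j+1)! = \sqrt\pi/j!$, and then \eqref{eq:gamma_n supp} for $\gfrac{n-2}$ cancels all remaining powers of $2$ and of $\pi$. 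This should leave the exact identity
\begin{equation*}
E(R) = \frac{\rho\,(1-\rho^2)^{(n-1)/2}}{\gfrac{n-1}}\sum_{j=0}^{\infty}\frac{\Gamma\left(\frac{n}{2}+j\right)^2}{\Gamma\left(\frac{n+1}{2}+j\right)}\frac{\rho^{2j}}{j!}.
\end{equation*}

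The key structural step is to read the function $\kappa$ of \eqref{eq:kappa supp} off the summand at $z=\frac{n}{2}+j$: since $\kappa(z)=\Gamma(z)^2\{\Gamma(z+\tfrac12)\Gamma(z-\tfrac12)\}^{-1}$, the ratio $\Gamma(\tfrac n2+j)^2\,\Gamma(\tfrac{n+1}2+j)^{-1}$ equals $\kappa(\tfrac n2+j)\,\Gamma(\tfrac{n-1}2+j)$. Substituting this and recognising the residual series as a generalised (negative-)binomial series, $\sum_{j\ge0}\frac{\Gamma(\frac{n-1}2+j)}{\gfrac{n-1}\,j!}\rho^{2j}=(1-\rho^2)^{-(n-1)/2}$, I obtain
\begin{equation*}
E(R)=\rho\sum_{j=0}^{\infty}\kappa\!\left(\tfrac{n}{2}+j\right)w_j,\qquad w_j:=\frac{\Gamma(\frac{n-1}2+j)}{\gfrac{n-1}\,j!}\,\rho^{2j}\,(1-\rho^2)^{(n-1)/2},
\end{equation*}
where the $w_j\ge0$ are probability weights with $\sum_j w_j=1$. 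Thus $E(R)/\rho$ is a convex combination of the values $\kappa(\tfrac n2+j)$, $j\ge0$.

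It then remains to control $\kappa$ on $[\tfrac n2,\infty)$. Writing $\log\kappa(z)=2\log\Gamma(z)-\log\Gamma(z+\tfrac12)-\log\Gamma(z-\tfrac12)$, log-convexity of $\Gamma$ forces $\kappa\le 1$, while concavity of the digamma $\psi$ (its derivative, the trigamma, is positive and decreasing) gives $(\log\kappa)'(z)=2\psi(z)-\psi(z+\tfrac12)-\psi(z-\tfrac12)\ge 0$, so $\kappa$ is non-decreasing. Hence $\kappa(\tfrac n2)\le\kappa(\tfrac n2+j)\le 1$ for every $j$, and since the $w_j$ sum to one this yields the two-sided bound $\kappa(\tfrac n2)\,|\rho|\le|E(R)|\le|\rho|$, which already recovers the qualitative fact that $R$ is, on average, pulled toward zero.

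Finally I would bound the width of the band. By the Stirling asymptotic recorded after \eqref{eq:kappa supp} (equivalently $\kappa(z)=1-\tfrac{1}{4z}+O(z^{-2})$), at $z=\tfrac n2$ one gets $1-\kappa(\tfrac n2)=O(n^{-1})$, so $E(R)/\rho\in[\kappa(\tfrac n2),1]$ lies within $O(n^{-1})$ of $1$. As $1-(1-n^{-1})^{1/2}=\tfrac{1}{2n}+O(n^{-2})$ is also $O(n^{-1})$, the triangle inequality gives $|E(R)/\rho-(1-n^{-1})^{1/2}|=O(n^{-1})$ uniformly in $\rho$, and multiplying by $|\rho|\le1$ delivers \eqref{eq: mean approx supp}. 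I expect the main obstacle to be the uniform control of $\kappa(\tfrac n2+j)$ over all $j\ge0$ simultaneously: the gamma bookkeeping is routine and the binomial summation standard, but the weights $w_j$ place their mass at $j=O(n)$, so a naive term-by-term expansion of the series is delicate; the digamma-concavity monotonicity argument is what makes the bound clean by reducing everything to the single endpoint value $\kappa(\tfrac n2)$.
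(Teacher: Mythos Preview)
Your argument is correct and follows essentially the same route as the paper: specialise the moment series to $m=1$, collapse the gamma factors, recognise $\kappa(\tfrac n2+j)$ in each summand, use that the remaining weights form the negative-binomial probability mass summing to one, and sandwich $E(R)/\rho$ between $\kappa(\tfrac n2)$ and $1$. The only cosmetic differences are that you justify the monotonicity of $\kappa$ via log-convexity of $\Gamma$ and concavity of the digamma (the paper merely asserts it), and you close the $O(n^{-1})$ gap through the Stirling expansion $\kappa(z)=1-\tfrac1{4z}+O(z^{-2})$ rather than the paper's direct claim $\kappa(\tfrac n2)\ge(1-n^{-1})^{1/2}$.
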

\begin{proof}
    
Since $f_R$ is a probability density function, if $m = 0$ we must have that the sum in (\ref{eq:m_moment_sum supp}) adds up to one and ($m + k = k)$, so the even terms can be written as $k = 2q$, where $q$ is a non-negative integer. Also, if $k$ is odd, then that term in \eqref{eq:m_moment_sum supp} is equal to zero (i.e., $g_m(k) = 0$). Hence, by using \eqref{eq:gam_q_even supp}, \eqref{eq:gamma_n supp} and \eqref{eq:gamma_q_odd supp} we have that
\begin{align} \label{eq:sum_0simple supp}
    1 &= \frac{2^{n - 3} (1 - \rho^2)^{\frac{n - 1}{2}} }{
        \pi \Gamma(n - 2) 
    } \sum_{q = 0}^{+\infty} \bigg [\gfrac{n - 1 + 2q} \bigg ]^2 \frac{ \gfrac{ 2q + 1} \gfrac{n - 2} }{ \gfrac{n + 2q - 1} }
        \frac{\big (2 \rho \big )^{2q}}{(2q)!} \nonumber \\
        &=  (1 - \rho^2)^{\frac{n - 1}{2}} \sum_{q = 0}^{+\infty} 
        \frac{ \gfrac{n + 2q - 1}}{ \Gamma(q + 1) \gfrac{n - 1} } (\rho^2)^q.
\end{align}
For the first moment we set $m = 1$ in (\ref{eq:m_moment_sum supp}), thus, for non-vanishing terms in the sum $k = 2q + 1$, where $q$ is a non-negative integer. Further, we have that the integral in (\ref{eq:m_integral supp}) is
\begin{align} \label{eq:g_one supp}
    g_1 (2q + 1) &= \frac{ \Gamma(q + 1 + \frac{1}{2}) \gfrac{n - 2} }{ \gfrac{n + 2q + 1} } \nonumber \\
    &= \bigg (q + \frac{1}{2} \bigg ) 2^{3 - n}  \sqrt{\pi} \Gamma(n - 2) \frac{\Gamma(q + \frac{1}{2})  }{
        \gfrac{n + 2q + 1} \gfrac{n - 1}
    } .
\end{align}
Since $m = 1$, when we substitute (\ref{eq:g_one supp}) into (\ref{eq:m_moment_sum supp}), we have that
\begin{equation} \label{eq:one_nosimple supp}
    E(R) = \frac{(1 - \rho^2)^{\frac{n - 1}{2} }}{\sqrt{\pi}} \sum_{q = 0}^{+\infty} \frac{ \gfrac{2q + n } \Gamma(q + \frac{n}{2}) }{ \gfrac{2q + n + 1} \gfrac{n - 1} } \frac{ \Gamma(q + \frac{1}{2} )  }{(2q + 1)!} 2^{2q} 
    2\bigg(q + \frac{1}{2} \bigg) \rho^{2q + 1} .
\end{equation}
Now, substitute \eqref{eq:gamma_q_odd supp}, \eqref{eq:gamma_n supp} into \eqref{eq:one_nosimple supp} and we have that
\begin{align} \label{eq: 1_app supp}
    E(R) &=  \frac{(1 - \rho^2)^{\frac{n - 1}{2}} }{\gfrac{n - 1} } 
    \sum_{q = 0}^{+\infty} \frac{\Gamma(q + \frac{n}{2} ) }{
    \Gamma(q + \frac{n + 1}{2} ) } \frac{\Gamma(q + \frac{n}{2} ) }{ \Gamma(q + 1) } \frac{2q + 1}{2q + 1} \rho^{2q + 1} \nonumber \\
    &= \frac{(1 - \rho^2)^{\frac{n - 1}{2} }}{\gfrac{n - 1} } \sum_{q = 0}^{+\infty} \frac{\Gamma(q + \frac{n}{2} )}{
    \Gamma(q + \frac{n + 1}{2} ) } \frac{\Gamma(q + \frac{n}{2} )}{
    \Gamma(q + \frac{n - 1}{2} ) } \frac{\Gamma(q + \frac{n - 1}{2} ) }{ \Gamma(q + 1) } \rho (\rho^2)^q \nonumber \\
    &= \rho (1 - \rho^2)^{\frac{n - 1}{2} } \sum_{q = 0}^{+\infty} \frac{\gfrac{n + 2q - 1} }{
   \Gamma(q + 1) \gfrac{n - 1} } \frac{\Gamma(q + \frac{n}{2} )}{
    \Gamma(q + \frac{n + 1}{2} ) } \frac{\Gamma(q + \frac{n}{2} )}{
    \Gamma(q + \frac{n - 1}{2} ) } (\rho^2)^q \nonumber \\
    &= \rho (1 - \rho^2)^{\frac{n - 1}{2} } \sum_{q = 0}^{+\infty} \frac{\gfrac{n + 2q - 1} }{
   \Gamma(q + 1) \gfrac{n - 1} } \kappa \left ( \frac{2q + n}{2} \right ) (\rho^2)^q
\end{align}
Note that above almost has the same summands as (\ref{eq:sum_0simple supp}), however, these are multiplied by a ratio of gammas evaluated at $q + n 2^{-1} \pm 2^{-1}$. Hence, we can approximate these ratios using $\kappa(q + \frac{n}{2}) $. Since $\kappa$ is a monotone non-decreasing function (i.e., $0 < \kappa(z - \epsilon) \leq \kappa(z) \leq 1$ for $z > \epsilon > 0$) we can lower bound the sum by substituting $\kappa(q + \frac{n}{2})$ with 
$\kappa(\frac{n}{2})$, and upper bound it by substituting $\kappa(q + \frac{n}{2})$ with $1$. Note that $\kappa(\frac{n}{2}) \geq (1 - n^{-1})^{\frac{1}{2}}$, thus, using \eqref{eq:kappa supp}, \eqref{eq:sum_0simple supp} and (\ref{eq: 1_app supp}) for $\rho > 0$ we have that
\begin{align} \label{eq:one_ineq supp}
    E(R) &\geq (1 - n^{-1})^{\frac{1}{2}} \rho \sum_{q = 0}^{+\infty} \frac{\Gamma(q + \frac{n - 1}{2} ) }{ \gfrac{n - 1}  \Gamma(q + 1) } (\rho^2)^q (1 - \rho^2)^{\frac{n - 1}{2}} \nonumber \\ 
    &= (1 - n^{-1})^{\frac{1}{2}} \rho, \nonumber \\
    E(R) &\leq \rho \sum_{q = 0}^{+\infty} \frac{\Gamma(q + \frac{n - 1}{2} ) }{ \gfrac{n - 1}  \Gamma(q + 1) } (\rho^2)^q (1 - \rho^2)^{\frac{n - 1}{2}} \nonumber \\ 
    &= \rho.
\end{align}
If $\rho < 0$, then the inequalities in (\ref{eq:one_ineq supp}) turn. Therefore, for $|\rho| < 1$ we conclude that
\begin{equation} \label{eq:one_approx supp}
    (1 - n^{-1})^{\frac{1}{2}} |\rho| \leq |E(R)| \leq |\rho| .
\end{equation}
If $|\rho| = 1$, then $R$ is a degenerate random variable equal to $\pm 1$, hence, in that case $R = \pm 1$ almost surely and $E(R) = \pm 1$.
\end{proof}
Since $E(R)$ and $\rho$ have the same sign, we have that 
$$|E(R) -  (1 - n^{-1})^{\frac{1}{2}} \rho | \leq |\rho| \{1 -  (1 - n^{-1})^{\frac{1}{2}} \} \leq n^{-1}.$$ 
Thus, we have that $E(R) =  (1 - n^{-1})^{\frac{1}{2}} \rho + O (n^{-1})$, which matches Hottelling's approximation error rate; see \cite{hotelling}, but does not depend on $\rho^2$ or any other higher moments.

\section{Second moment}
\label{sec:second moment}
\begin{lemma}{[\cite{hotelling}]} \label{prop:var_approx supp}
    Let $R$ be the sample correlation coefficient computed from a sample of size $n \geq 3$ coming from a bivariate normal distribution with population correlation coefficient $\rho \in (-1, 1)$. Then we have that 
    \begin{equation} \label{eq:var_approx supp}
     \mathrm{var} (R) = (1 - \rho^2)^2 (n - 1)^{-1} + O(n^{-2}).
\end{equation}
\end{lemma}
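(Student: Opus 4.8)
The plan is to work from the decomposition $\mathrm{var}(R) = E(R^2) - \{E(R)\}^2$ and expand each moment to order $O(n^{-2})$. The first thing to notice is that the first-moment approximation \eqref{eq: mean approx supp} is by itself too crude for this purpose: its $O(n^{-1})$ error survives squaring and would swamp the target $O(n^{-2})$ remainder. So $E(R)$ must be sharpened to $E(R) = \rho - \rho(1-\rho^2)\{2(n-1)\}^{-1} + O(n^{-2})$ before it is combined with $E(R^2)$, and the leading $\rho^2$ terms of $E(R^2)$ and $\{E(R)\}^2$ must be tracked with matching precision since they cancel exactly.

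First I would compute $E(R^2)$ from \eqref{eq:m_moment_sum supp} with $m = 2$. Only the terms $k = 2q$ survive, and applying \eqref{eq:gamma_n supp}, \eqref{eq:gam_q_even supp} together with $\Gamma(z+1)=z\Gamma(z)$ at $z = q + \tfrac{n-1}{2}$ collapses the series to
\begin{equation*}
E(R^2) = (1-\rho^2)^{\frac{n-1}{2}} \sum_{q=0}^{+\infty} \frac{2q+1}{2q+n-1}\, \frac{\gfrac{n+2q-1}}{\Gamma(q+1)\gfrac{n-1}} (\rho^2)^q .
\end{equation*}
By \eqref{eq:sum_0simple supp} the weights $p_q := (1-\rho^2)^{(n-1)/2}\,\Gamma(q+\tfrac{n-1}{2})\{\Gamma(q+1)\gfrac{n-1}\}^{-1}(\rho^2)^q$ form a probability mass function — indeed the negative binomial law with index parameter $\tfrac{n-1}{2}$ — whose moments are explicit. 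Writing $S := 2Q+n-1$ for the associated random index $Q$, I would note that $E(R^2) = E\{1 - (n-2)S^{-1}\}$, with $E(S) = (n-1)(1-\rho^2)^{-1}$ and $\mathrm{var}(S) = 2(n-1)\rho^2(1-\rho^2)^{-2}$.

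Then I would apply a delta-method (second-order Taylor) expansion of $S^{-1}$ about $E(S)$, which is legitimate because the relative dispersion $\mathrm{var}(S)/\{E(S)\}^2 = 2\rho^2(n-1)^{-1}$ is $O(n^{-1})$ and $S \ge n-1$ almost surely. Here the second-order term is genuinely needed, since the prefactor $(n-2)$ forces me to expand $E(S^{-1}) = (1-\rho^2)(n-1)^{-1} + 2\rho^2(1-\rho^2)(n-1)^{-2} + O(n^{-3})$ to two terms; this gives $E(R^2) = \rho^2 + (1-\rho^2)(1-2\rho^2)(n-1)^{-1} + O(n^{-2})$. For the refined first moment I would use the representation \eqref{eq: 1_app supp}, namely $E(R) = \rho\, E\{\kappa(Q + \tfrac n2)\}$, insert the Stirling form of $\kappa$ recorded after \eqref{eq:kappa supp} to get $\kappa(z) = 1 - (4z)^{-1} + O(z^{-2})$, and take the expectation under $p_q$ — only the leading reciprocal moment is needed this time — yielding $\{E(R)\}^2 = \rho^2 - \rho^2(1-\rho^2)(n-1)^{-1} + O(n^{-2})$. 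Subtracting, the two $O(n^{-1})$ coefficients combine as $(1-\rho^2)\{(1-2\rho^2)+\rho^2\} = (1-\rho^2)^2$, delivering $\mathrm{var}(R) = (1-\rho^2)^2(n-1)^{-1}+O(n^{-2})$.

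The main obstacle is the uniform control of the delta-method remainders. Because the mass of $p_q$ concentrates around $q \asymp n$ rather than at small $q$, the Stirling expansion of $\kappa$ and the Taylor expansion of $S^{-1}$ must be justified with error terms that hold uniformly over the support and whose expectations are truly $O(n^{-2})$; this amounts to estimating $E(S^{-2})$, $E(S^{-3})$ and bounding the tail contribution of large $q$. The secondary difficulty is bookkeeping the cancellation: any looseness in $E(R)$ — in particular reverting to \eqref{eq: mean approx supp} — destroys the exact combination of $O(n^{-1})$ terms. An alternative that avoids the probabilistic interpretation is to expand both gamma-ratio series directly term by term through Stirling's formula, as in Hotelling; the negative binomial viewpoint merely organizes the same computation around known moments.
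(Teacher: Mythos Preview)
The paper does not actually prove this lemma; both in Theorem~\ref{th: moments} and here it simply defers to \cite{hotelling}. Your sketch therefore supplies what the paper omits, and it is correct. The negative-binomial identification of the weights $p_q$ gives $E(S)=(n-1)(1-\rho^2)^{-1}$ and $\mathrm{var}(S)=2(n-1)\rho^2(1-\rho^2)^{-2}$ exactly as you state; the second-order delta expansion of $E(S^{-1})$ then yields $E(R^2)=\rho^2+(1-\rho^2)(1-2\rho^2)(n-1)^{-1}+O(n^{-2})$, and the refinement $E(R)=\rho-\rho(1-\rho^2)\{2(n-1)\}^{-1}+O(n^{-2})$ from $\kappa(z)=1-(4z)^{-1}+O(z^{-2})$ combines with it to give the claimed $(1-\rho^2)^2(n-1)^{-1}$. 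Hotelling's original route is direct hypergeometric and Stirling manipulation of the series; your version is equivalent but tidier, since the negative-binomial moments replace term-by-term bookkeeping, and your observation that the crude bound \eqref{eq: mean approx supp} is too loose for this purpose is a point the paper glosses over entirely. The remainder concern you raise is in fact harmless here: because $S\ge n-1$ almost surely, one has $E(S^{-k})\le (n-1)^{-k}$ deterministically, so the $O(n^{-2})$ control of the delta-method tails requires no separate argument.
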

\begin{proof}
See \cite{hotelling}. 
\end{proof}
Note that the variance is largest when $\rho = 0$ and that if $X$ and $Y$ are perfectly correlated (i.e., $\rho =\pm 1$), then the variance is equal to zero given that $R$ is a degenerate random variable in that case.

\section{Concentration inequalities}
% \begin{proposition} \label{prop: concentration inequality}
%     Let $R$ be the sample correlation coefficient computed from a sample of size $n \geq 3$ coming from a bivariate normal distribution with population correlation coefficient $\rho \in (-1, 1)$. Then, we have that
%     \begin{equation*}
%         \mathrm{\mathbf{Pr}} ( |R - \rho| > t) \leq 2 \exp \big [- n t^2 \{2 (1 + 2 n t) \}^{-1} \big ],
%     \end{equation*}
%     moreover, for $n$ sufficiently large, we have the approximations
%     \begin{equation} \label{eq: concervative bound supp}
%          \mathrm{\mathbf{Pr}} ( |R - \rho| > t) \lesssim 2 \exp \left \{- \frac{n t^2}{8 (1 - \rho^2)^2 } \right \},
%     \end{equation}
%      \begin{equation} \label{eq: agressive bound supp}
%         \mathrm{\mathbf{Pr}} ( |R - \rho| > t) \lesssim 2 \exp \left \{- \frac{n t^2}{4 (1 - \rho^2)^2 } \right \}.
%     \end{equation}
% \end{proposition}
We present a proof of Proposition \ref{prop: concentration inequality}. 
\begin{proof}
    Since $R$ is bounded almost surely, all of its moments exist, moreover, $E \{ (R - \rho)^{2m} \} = E \{ (R - \rho)^{2m - 2} (R - \rho)^{2}\} \leq 2^{m -2} \nu^2$, where $m \in \{1, 2, 3, \dots\}$, and $\nu^2 := \mathrm{var} (R) \leq (n - 1)^{-1}$, where equality holds if $\rho = 0$. Hence, $R$ satisfies Bernstein's condition; see \cite{wainwright_2019}. Consequently, by \eqref{eq:var_approx supp}, for $t > 0$ we have that
    \begin{equation*}
         \prob ( |R - E(R) | > t ) \leq 2 \exp \big [-t^2 \{2 (\nu^2 + 2 t) \}^{-1} \big ],
    \end{equation*}
   scaling properly by $(1 - n^{-1})^\frac{1}{2}$ finishes the first part of the proof.
   \newline
    We proceed to the second part. Combining Markov's inequality, \eqref{eq: mean approx supp} and \eqref{eq:var_approx supp} gives 
    \begin{equation} \label{eq: R consistency supp}
        R = \rho + O_{\prob} (n^{-\frac{1}{2}}).
    \end{equation}
    \par
    Notice that by looking at \eqref{eq:m_moment_sum supp} and expressing the gamma function as a factorial we have that 
    $$g_{2m} (2 q) \leq \prod_{t = 1}^{m - 1} \left ( \frac{2t + 2q + 1}{2t + 2q + n - 1} \right ) g_2 (2q),$$ 
    which when substituted into \eqref{eq:m_moment_sum supp} and noting that 
    $$\left ( 1 - \frac{n - 2}{2m + n - 1} \right ) \simeq \left ( 1 - \frac{n - 2}{2m + n + 2q -3} \right )$$ 
    for $q \in \{1, 2, \dots \}$ results in the inequality
    \begin{equation} \label{eq r2m inequality supp}
       \left ( 1 - \frac{n - 2}{n + 1} \right )^{m -1} E(R^2) \leq E(R^{2m}) \lesssim \left ( 1 - \frac{n - 2}{2m + n - 1} \right )^{m - 1} E(R^2),
    \end{equation}
    thus, we can bound from below and above even moments with the second moment as follows. By Jensen's inequality we have that
    \begin{equation} \label{eq: jensen lhs supp}
         \{E(R^2)\}^m \leq E(R^{2m}).
    \end{equation}
    \par
   Now, by \eqref{eq r2m inequality supp} there exists a sequence $(a_n) \in \mathbb{R}$ such that if $E(R^2) > \left ( 1 - \frac{n - 2}{2m + n - 1} \right )$, then we can find a sequence $a_n \geq 1$ that satisfies
   \begin{equation*}
       E(R^{2m}) \leq a_n  \{E(R^2)\}^m,
   \end{equation*}
    similarly, if $E(R^2) \leq  \left ( 1 - \frac{n - 2}{2m + n - 1} \right )$, then we can find a sequence $a_n \geq 1$ that satisfies 
    $$ \left ( 1 - \frac{n - 2}{2m + n - 1} \right )^{m -1} \leq a_n \{E(R^2) \}^{m -1},$$
    thus, we have that
     \begin{equation*}
        E(R^{2m}) \leq \left ( 1 - \frac{n - 2}{2m + n - 1} \right )^{m - 1} E(R^2) \leq  a_n  \{E(R^2)\}^m,
   \end{equation*}
   so by \eqref{eq: jensen lhs supp} and the above there exists a sequence $(a_n) \in \mathbb{R}$ such that $a_n \geq 1$ and
\begin{equation} \label{eq: close jensen supp}
    \{E(R^2)\}^m \leq E(R^{2m}) \leq  a_n  \{E(R^2)\}^m.
\end{equation}
   
By \eqref{eq: R consistency supp} and the continuous mapping theorem we have that all three terms in \eqref{eq: close jensen supp} tend to $\rho^{2m}$ as $n \to \infty$ if $a_n \to 1$. Hence, there exists a sequence $a_n \geq 1$ that approaches one as $n \to +\infty$ that allows us to perform the approximation:
$$E(R^{2m}) \approx \{E(R^2)\}^m,$$
where $m \in \{1, \dots \}$, and we find $n_0 \in \mathbb{N}$ such that $a_{n_0} \simeq 1$, which holds for any $n \geq n_0$.
\newpage
Using this, and recalling that $|\rho| < 1$ we bound the central even moments as follows.
    \begin{align} \label{eq: 2m moment bound supp}
        E \{ (R - \rho)^{2m} \} &= \sum_{j = 0}^{2m} \binom{2m}{j}  (-1)^{j} E(R^j) \rho^{2m - j} \nonumber \\
        &=  \sum_{j = 0}^{m} \binom{2m}{2j} E(R^{2j}) (\rho^2)^{m - j} \cdots  \nonumber \\
        &- \sum_{j = 0}^{m -1} \binom{2m}{2j + 1} E(R^{2j + 1}) \rho^{2m - 2j - 1} \nonumber \\
        &\overset{(i)}{\lesssim}  \sum_{j = 0}^{m} \binom{2m}{2j} \{E(R^{2}) \}^j (\rho^2)^{m - j} \cdots \nonumber \\
        &- \sum_{j = 0}^{m -1} \binom{2m}{2j + 1} \{E(R^{2})\}^{2j + 1} (\rho^2)^{m - j} |\rho^{-1}| \nonumber \\
        &\overset{(ii)}{\leq} \sum_{j = 0}^{m} \binom{2m}{j} (-1)^{j} \{E(R^2)\}^{j} (\rho^2)^{m - j} \cdots \nonumber \\
        &+ \sum_{j = m + 1}^{2m} \binom{2m}{j} (-1)^{j} \{E(R^2)\}^{j} (\rho^2)^{m - j} \nonumber \\
        &= \sum_{j = 0}^{m} \binom{2m}{j} \binom{m}{j}^{1 - 1} (-1)^{j}\{E(R^2)\}^{j} (\rho^2)^{m - j} \cdots  \nonumber \\
        &+ \sum_{j = m + 1}^{2m} \binom{2m}{j} (-1)^{j} \{E(R^2)\}^{j} (\rho^2)^{m - j},
    \end{align}
    where $(i)$ follows from the approximation given by \eqref{eq: close jensen supp} (i.e., $E(R^{2m}) \approx \{E(R^2)\}^m$), noting that the sign does not alternate for odd summands in subtracting terms, i.e., $\rho^{2m - 2j - 1}$ has the same sign as $E(R^{2j + 1})$, and $(ii)$ follows from rearranging terms and noting that $\rho^{2(m - j)} |\rho^{-1}| > \rho^{2(m - j)}$. Hence, the substitutions above increase the sum by decreasing the subtracting terms in absolute value. Now, we compute the following.
    \begin{align} \label{eq: m sum bound inequality one supp}
        &\sum_{j = 0}^{m} \binom{2m}{j} \binom{m}{j}^{1 - 1} (-1)^{j}\{E(R^2)\}^{j} (\rho^2)^{m - j} \cdots \nonumber \\
        &= \sum_{j = 0}^{m} \frac{(2m)!}{j! (2m - j)!} \cdot \frac{j! (m - j)!}{m!} \binom{m}{j} (-1)^{j} \{E(R^2)\}^{j} (\rho^2)^{m - j} \nonumber \\
        &= \frac{(2m)!}{m!} \sum_{j = 0}^{m} \bigg \{ \prod_{k = 0}^{m} (2m - j - k) \bigg \}^{-1} \binom{m}{j} (-1)^{j} \{E(R^2)\}^{j} (\rho^2)^{m - j} \nonumber \\
        &\leq \frac{(2m)!}{2^{m} m!} \sum_{j = 0}^{m} \binom{m}{j} (-1)^{j} \{E(R^2)\}^{j} (\rho^2)^{m - j} \nonumber \\
        &= \frac{(2m)!}{2^{m} m!} \{ E(R^2) - \rho^2 \}^m \nonumber \\
        &= \frac{(2m)!}{2^{m} m!} \nu^{2m} + O(n^{-1}),
    \end{align}
    where $\nu^2  = \mathrm{var} (R)$ and the last equality follows from substituting \eqref{eq: mean approx supp} into the above. Also, we have that
    \begin{align} \label{eq: m sum boun inequality two supp}
        \sum_{j = m + 1}^{2m} \binom{2m}{j} (-1)^{j} \{E(R^2)\}^{j} (\rho^2)^{m - j} &\leq \sum_{j = 0}^{m} \binom{2m}{j} (-1)^{j} \{E(R^2)\}^{j} (\rho^2)^{m - j} \nonumber \\
        &\leq \frac{(2m)!}{2^{m} m!} \{ E(R^2) - \rho^2 \}^m,
    \end{align}
    hence, using \eqref{eq: 2m moment bound supp}, \eqref{eq: m sum bound inequality one supp} and \eqref{eq: m sum boun inequality two supp}, we have that for $m \in \{1, \dots \}$
    \begin{equation*}
        E \{ (R - \rho)^{2m} \} \lesssim \frac{(2m)!}{2^{m} m!} \left ( 2^{1/2m} \nu \right )^{2m} \leq  \frac{(2m)!}{2^{m} m!} \left (\sqrt{2} \nu \right)^{2m},
    \end{equation*}
    thus, $R$ satisfies a sub-Gaussian type bound; see \cite{wainwright_2019}. Therefore, it approximately satisfies a Gaussian concentration bound, which combined with Lemma \ref{prop:var_approx supp} and for $t > 0$ results in
    \begin{equation*} 
       \mathrm{\mathbf{Pr}} ( |R - \rho| > t) \lesssim  2 \exp \left \{-\frac{t^2 }{4 (1 - \rho^2)^2}  \right \} \leq  2 \exp \left (- \frac{n t^2}{4}  \right).
    \end{equation*}
    \par
    We achieve a tighter bound by approximating the sum in \eqref{eq: m sum boun inequality two supp} with a semi-telescopic sum (i.e., alternating sign terms of very small numbers, albeit multiplied by combinatorial coefficients), which is approximately equal to zero. The approximation is
    $$\sum_{j = m + 1}^{2m} \binom{2m}{j} (-1)^{j} \{E(R^2)\}^{j} (\rho^2)^{m - j} \approx 0,$$ 
    which results in the more aggressive bound for $t > 0$ given by
    \begin{equation} \label{eq: agressive concentration supp}
        \mathrm{\mathbf{Pr}} ( |R - \rho| > t) \lesssim 2 \exp \left (- \frac{t^2 }{2 \nu^{2} }  \right ) \leq 2 \exp \left (- \frac{n t^2}{2} \right ).
    \end{equation}
\end{proof}

\section{Simulation experiment supplement}
\label{sec: simulation experiment}
The results for $n = 3, 5, 30, 100$ follow in Table \ref{tab:corr coeff simulation 2} below. In the interest of replicability, we perform the experiment by fixing the random seed to 2023 in \texttt{R}. Essentially, we simulate 10,000 observations of $R$ computed using samples of size $n = 3, 5, 30, 100$ coming from a bivariate normal distribution with population correlation coefficient $\rho$. In Table \ref{tab:corr coeff simulation 2}: $E(R)$ is given by \eqref{eq:one_approx supp},  $\overline{R}$ is the mean of the simulations, $\mathrm{sd} (R)$ is given by the square root of \eqref{eq:var_approx supp},  $s_{R}$ is the standard deviation of the simulated values. We also study the concentration properties of the bounds given by Proposition \ref{prop: concentration inequality}. We solve for $t > 0$ with $n = 10$, and obtain the coverage intervals:
\begin{align*}
    C_0 &= \left \{ (-t + \rho, \rho + t) : \enspace 2 \exp \left [- \frac{n t^2}{8 (1 - \rho^2)^2 } \right ] = 0.05 \right \}, \\
    C_1 &= \left \{ (-t + \rho, \rho + t) : \enspace 2 \exp \left [- \frac{n t^2}{4 (1 - \rho^2)^2 } \right ] = 0.05 \right \}, \\
    C_2 &= \left \{ (-t + \rho, \rho + t) : \enspace 2 \exp \left [- \frac{n t^2}{2 (1 - \rho^2)^2 } \right ] = 0.05 \right \}.
\end{align*}
Which by Proposition \ref{prop: concentration inequality} give
\begin{equation*}
    \mathrm{\mathbf{Pr}} \left ( -t + \rho \leq R \leq t + \rho \right ) \gtrsim 0.95,
\end{equation*}
where $C_2$ is the tightest bound and $C_0$ is loosest one.
% Second Simulation
\begin{table}
    \centering
   \begin{tabular}{cccccccc}
   \vspace{4pt}
   &&\multicolumn{5}{r}{10,000 simulations with sample size $n = 3$}\\
$\rho$ & $E(R)$ & $\overline{R}$ & $\mathrm{sd} (R)$ & $s_{R}$ & $C_0$ & $C_1$ & $C_2$ \\ 
\hline
0 & 0 & 0.009 & 0.707 & 0.709 & 1 & 1 & 1 \\
 -0.25 & -0.204 & -0.194 & 0.663 & 0.691 & 1 & 1 & 1 \\
 0.56 & 0.457 & 0.457 & 0.485 & 0.616 & 1 & 1 & 0.92 \\
 -0.75 & -0.612 & -0.646 & 0.309 & 0.507 & 0.977 & 0.93 & 0.892 \\
 \vspace{12pt}
 0.95 & 0.776 & 0.9 & 0.069 & 0.263 & 0.943 & 0.926 & 0.906  \\
    \vspace{3pt}
% n = 3 ends here
&&\multicolumn{5}{r}{10,000 simulations with sample size $n = 5$}\\
$\rho$ & $E(R)$ & $\overline{R}$ & $\mathrm{sd} (R)$ & $s_{R}$ & $C_0$ & $C_1$ & $C_2$ \\ 
\hline
0 & 0 & -0.001 & 0.5 & 0.501 & 1 & 1 & 1 \\
 -0.25 & -0.224 & -0.22 & 0.469 & 0.479 & 1 & 1 & 1 \\
 0.56 & 0.501 & 0.512 & 0.343 & 0.4 & 1 & 0.993 & 0.955 \\
 -0.75 & -0.671 & -0.703 & 0.219 & 0.305 & 0.99 & 0.965 & 0.932 \\
 \vspace{12pt}
 0.95 & 0.85 & 0.931 & 0.049 & 0.106 & 0.971 & 0.95 & 0.92  \\
 % n = 5 ends here
    &&\multicolumn{5}{r}{10,000 simulations with sample size $n = 30$}\\
    \vspace{3pt}
$\rho$ & $E(R)$ & $\overline{R}$ & $\mathrm{sd} (R)$ & $s_{R}$ & $C_0$ & $C_1$ & $C_2$ \\ 
\hline
0 & 0 & 0.003 & 0.186 & 0.184 & 1 & 1 & 0.996 \\
 -0.25 & -0.246 & -0.247 & 0.174 & 0.175 & 1 & 1 & 0.993 \\
 0.56 & 0.551 & 0.555 & 0.127 & 0.129 & 1 & 0.998 & 0.988 \\
 -0.75 & -0.737 & -0.743 & 0.081 & 0.086 & 1 & 0.997 & 0.98 \\
 \vspace{12pt}
0.95 & 0.934 & 0.948 & 0.018 & 0.02 & 0.999 & 0.993 & 0.973  \\
 \vspace{3pt}
% n = 30 ends here
     &&\multicolumn{5}{r}{10,000 simulations with sample size $n = 100$}\\
 $\rho$ & $E(R)$ & $\overline{R}$ & $\mathrm{sd} (R)$ & $s_{R}$ & $C_0$ & $C_1$ & $C_2$ \\ 
\hline
0 & 0 & 0.001 & 0.101 & 0.101 & 1 & 1 & 0.993 \\
 -0.25 & -0.249 & -0.25 & 0.094 & 0.095 & 1 & 1 & 0.994 \\
 0.56 & 0.557 & 0.559 & 0.069 & 0.069 & 1 & 0.999 & 0.993 \\
 -0.75 & -0.746 & -0.748 & 0.044 & 0.045 & 1 & 0.999 & 0.989 \\
 0.95 & 0.945 & 0.95 & 0.01 & 0.01 & 1 & 0.999 & 0.988 \\
\end{tabular}
    \caption{
        Simulation experiment with ten-thousand samples of $R$ computed from samples of different sizes coming from a bivariate normal distribution. See text for further description.
    }
    \label{tab:corr coeff simulation 2}
\end{table}

\end{document}